\documentclass[11pt]{amsart}

\usepackage{amsmath}
\usepackage{amssymb,amsthm}
\usepackage{mathtools}
\usepackage[utf8]{inputenc}
\usepackage{xcolor}
\usepackage[margin=2.5cm]{geometry}
\usepackage{graphicx}
\usepackage[hidelinks]{hyperref}

\theoremstyle{definition}
\newtheorem{theorem}{Theorem}
\newtheorem{proposition}[theorem]{Proposition}
\newtheorem{lemma}[theorem]{Lemma}
\newtheorem{corollary}[theorem]{Corollary}

\newtheorem{remark}[theorem]{Remark}

\newcommand{\F}{\mathcal F}

\newcommand{\Lip}{\operatorname{Lip}}
\newcommand{\dist}{\operatorname{dist}}

\newcommand{\sgn}{\operatorname{sgn}}

\newcommand{\norm}[1]{\lVert#1\rVert}
\newcommand{\abs}[1]{\left\lvert#1\right\rvert}
\newcommand{\set}[1]{\left\{#1\right\}}
\newcommand{\angles}[1]{\left\langle#1\right\rangle}

\def\N{{\mathbb N}}
\def\R{{\mathbb R}}
\def\Z{{\mathbb Z}}

\title[Coarse-Lipschitz embedding into a separable dual]{A Coarse-Lipschitz Embedding of $c_0$\\
into a Separable Dual Banach Space}

\author{B\"unyamin Sar\i}
\email{bunyamin.sari@unt.edu}
\address{University of North Texas, Denton, TX}

\begin{document}

\maketitle

\begin{abstract}
We prove that $c_0$ admits a coarse Lipschitz embedding into a
separable dual Banach space and that the optimal coarse Lipschitz distortion is equal to two.  Let
$$
G=\mathbb Z^{<\omega}\subset c_0,
\qquad
G_R=G\cap R B_{c_0},
\quad R\in\mathbb N,
$$
with the metric inherited from $c_0$.  On each $G_R$ we construct a
commuting family of retractions onto finite initial segments of a special ordering of $G_R$, with
Lipschitz constant at most two.  Associated to these retractions there is a boundedly complete Schauder basis of $\mathcal F(G_R)$ whose
basis constant is at most two and which is $2R$-equivalent to the unit vector basis of $\ell_1$.  Consequently, each $\mathcal F(G_R)$ is
$2$-isomorphic to a separable dual Banach space, uniformly in $R$.

Kalton's annular decomposition then gives an embedding of
$\mathcal F(G)$ into a separable dual space with distortion at
most $2(1+\varepsilon)$ for every $\varepsilon>0$.  A decomposition
result of Aliaga and Medina further shows that
\[
\mathcal F(G)
\cong
\Big(
\bigoplus_{n\geq0}\mathcal F(G_{2^n})
\Big)_{\ell_1},
\]
and hence $\mathcal F(G)$ itself is isomorphic to a separable dual
Banach space.  The constant two is sharp: if $G_2$ embeds into $X^*$ with distortion strictly smaller than
two, then $X$ contains an isomorphic copy of $\ell_1$.  It follows that
the infimum of the coarse Lipschitz distortions of embeddings of
$c_0$ into separable dual Banach spaces is exactly two.
\end{abstract}

\section{Introduction}

Kalton proved that $c_0$ does not coarsely embed into a reflexive Banach space and, more generally, that if a Banach space coarsely contains $c_0$, then one of its iterated duals must be nonseparable \cite{Kalton2007}. Recall the well-known facts that $c_0$ does not linearly or bi-Lipschitz embed into a separable dual Banach space. This naturally raises the question whether $c_0$ can coarsely embed into a separable dual Banach space. Braga, Lancien, Petitjean, and Proch\'azka formulated and studied both the coarse and the coarse Lipschitz versions of this problem in \cite[Problem 1.2, 1.3]{BLPP}. They proved, in particular, that a coarse-Lipschitz embedding of $c_0$ into a separable dual space cannot have distortion strictly smaller than $3/2$. Further obstructions for several classes of separable dual spaces, including generalized James and James tree spaces, were obtained in \cite{JKS}.

 In this paper we answer both questions affirmatively. Moreover, we obtain the optimal quantitative result. If \[ \mathfrak D_{\mathrm{sd}}(c_0) = \inf\Big\{ \operatorname{dist}_{\mathrm{CL}}(f): \begin{array}{c} f:c_0\longrightarrow Y^*\text{ is a coarse-Lipschitz embedding},\\ Y^*\text{ is separable} \end{array} \Big\}, \] then \[ \mathfrak D_{\mathrm{sd}}(c_0)=2. \] Here $\operatorname{dist}_{\mathrm{CL}}(f)$ denotes the distortion of the coarse Lipschitz embedding $f$. 
 Since every coarse Lipschitz embedding is a coarse embedding, this also gives an affirmative answer to the coarse version of the problem. 
 
 The upper and lower estimates use different arguments. For the upper estimate we consider the integer grid \[ G=\mathbb Z^{<\omega}\subset c_0 \] and its bounded pieces \[ G_R=G\cap R B_{c_0}, \qquad R\in\mathbb N. \]
  The grid $G$ is a net in $c_0$, and hence $G$ and $c_0$ are coarse Lipschitz equivalent. The separable dual space which admits a coarse Lipschitz embedding of $c_0$ with distortion $\le 2(1+\varepsilon)$ turn out to be a space isomorphic to the Lipschitz-free space $\F(G)$. 
 
 For every $R$, we construct a Schauder basis of $\F(G_R)$ with basis constant at most two, independently of $R$. The construction is based on the {\em parent map} \[ (px)_i=\operatorname{sgn}(x_i)(|x_i|-1)_+, \] which moves every nonzero coordinate one unit toward zero. Its iterates give $G_R$ the structure of a rooted tree with root $0$ and height $R$. For every nonzero $x$, the pair $(px,x)$ is its {\em parent edge}, and the associated molecule \[ \delta(x)-\delta(px) \] is one of the basis vectors. A suitable enumeration of $G_R$ produces commuting retractions onto finite initial segments. All these retractions are $2$-Lipschitz, uniformly in $R$, and their linearizations are the partial sum projections of the parent edge basis. We also prove that this basis is $2R$-equivalent to the unit vector basis of $\ell_1$. In particular, it is boundedly complete. This yields, by a standard fact, that \[ \F(G_R) \] is $2$-isomorphic to a separable dual Banach space, with a constant independent of $R$. We then use Kalton's annular decomposition \cite{Kalton2004}. It gives, for every $\eta>0$, an almost isometric embedding \[ \F(G)\longrightarrow W:= \Big( \bigoplus_{n\geq0}\F(G_{2^n}) \Big)_{\ell_1}. \] Thus we obtain one separable dual Banach space $Z$, $2$-isomorphic to $W$, such that, for every $\eta>0$, there is a linear embedding \[ T_\eta:\F(G)\longrightarrow Z \] satisfying \[ \|\mu\| \leq \|T_\eta\mu\| \leq 2(1+\eta)\|\mu\|. \] Since $G$ is a net in $c_0$ this gives a coarse Lipschitz embedding of $c_0$ into $Z$. 
 
 It was pointed out to us by R\'uben Medina that the work of Aliaga and Medina \cite{AliagaMedina2026}, together with Kalton's decomposition and Pe{\l}czy\'nski's decomposition method, gives \[ \F(G) \cong \Big( \bigoplus_{n\geq0}\F(G_{2^n}) \Big)_{\ell_1}. \] 
It follows that $\F(G)$ itself is isomorphic to the separable dual space $Z$; see Corollary~\ref{cor:FG-separable-dual}.

For the lower distortion estimate, we prove a sharp local obstruction. If \[ f:G_2\longrightarrow X^* \] is a bi-Lipschitz embedding with distortion strictly smaller than two, then $X$ contains an isomorphic copy of $\ell_1$; see Proposition~\ref{prop:radius-two}. In particular, $G_2$ cannot embed with distortion strictly smaller than two into a separable dual Banach space. Applying this result to large dilates of $G_2$ shows that every coarse Lipschitz embedding of $c_0$ into a separable dual space has distortion at least two. Together with the construction above, this proves \[ \mathfrak D_{\mathrm{sd}}(c_0)=2. \] 

The scheme of our construction is related to the work of Garc\'ia-Lirola, Petitjean, Proch\'azka, and Rueda Zoca \cite{GLPPRZ}, and to that of Braga, Lancien, Petitjean, and Proch\'azka \cite{BLPP}. The former authors gave a criterion identifying certain Lipschitz-free spaces as dual spaces. The latter applied this criterion to bounded pieces of Kalton's interlaced graphs and then used Kalton's decomposition to place the full graph in a countable $\ell_1$-sum of separable dual spaces. One could use the retractions constructed here to recover that that duality criterion; see Remark~\ref{rem:compact-variant}. However, our argument used in the main proof is more direct and gives the uniform constant two immediately. 

Our construction is also related to the theory of retractional bases for Lipschitz-free spaces developed in \cite{HN,HM}. H\'ajek and Novotn\'y \cite{HN} had already constructed a monotone Schauder basis for the free space over the full integer grid $G=\mathbb Z^{<\omega}\subset c_0$, and H\'ajek and Medina \cite{HM} obtained Schauder bases in free spaces over many nets in Banach spaces. The feature needed here is different: the
construction is carried out uniformly on the bounded grids $G_R$,
and an explicit boundedly complete
basis with basis constant at most two is constructed, independently of $R$.  This
uniform control is what makes the passage to one separable dual
target possible.

\medskip
\noindent\textbf{Acknowledgments.}
We thank R\'uben Medina for pointing out
Corollary~\ref{cor:FG-separable-dual}.

\medskip
\noindent\textbf{Use of generative AI.}
OpenAI's ChatGPT was used during the exploration of the problem,
including in developing an initial proof based on the scheme described in
Remark~\ref{rem:compact-variant}. The proof presented here is more direct and was written by the author who takes full responsibility for the content.

\section{Preliminaries}
We recall briefly standard terms and facts used in the paper. For more background, see, for instance, \cite{BL}.
\subsection{The integer grid and coarse-Lipschitz maps }
Let
$$
G=\Z^{<\omega},\ \ G_R=\{x\in G:\|x\|_{\infty}\le R\};
$$
with the metric $d(x,y)=\|x-y\|_{\infty}$ inherited from $c_0$.  

For a map $f:X\to Y$ between Banach (or metric) spaces, we say that $f$ is a {\em coarse-Lipschitz embedding} if there are $a, L>0$ and $b, B\ge 0$ such that

\begin{equation}\label{eq:cl-bounds}
a\|x-y\|-b\le \|f(x)-f(y)\|\le L\|x-y\|+B\ \ (x, y\in X).
\end{equation}

Its coarse-Lipschitz distortion is 
$$
\dist_{CL}(f)=\inf\set{\frac{L}{a}: a, L, b, B \text{ satisfy  \eqref{eq:cl-bounds}}}
$$

This is the large scale analog of bi-Lipschitz distortion. 

$G$ and $c_0$ are coarse-Lipschitz equivalent. The map $Q:c_0\to G$ that rounds each coordinate to a nearest integer gives
$$
\|x-y\|_{\infty}-1\le d(Qx, Qy)\le \|x-y\|_{\infty}+1.
$$

\subsection{Lipschitz-free spaces}
Let $(M,d,0)$ be a metric space with a distinguished point 0. The Banach space $\Lip_0(M)$ consists of all real valued Lipschitz functions on $M$ that vanish at 0, equipped with
$$
\norm{f}_{\Lip}=\sup_{x\neq y}\frac{|f(x)-f(y)|}{d(x,y)}.
$$
For $x\in M$, evaluation at $x$ defines $\delta_M(x)\in \Lip_0(M)^*$ by
$\langle \delta_M(x), f\rangle=f(x).$ The {\em Lipschitz-free space} over $M$ is
$$
\F(M)=\overline{\operatorname{span}}\set{\delta_M(x): x\in M}\subseteq \Lip_0(M)^*.
$$

The Dirac map $\delta_M: M\to \F(M)$ is an isometric embedding:
$$
       \norm{\delta_M(x)-\delta_M(y)}_{\F(M)}=d(x,y).
$$
These spaces linearize Lipschitz maps in the following sense \cite{GK}. If $f:(M,0)\to (N,0)$ is Lipschitz, there is a unique bounded linear operator
$$
\widehat f:\F(M)\to \F(N),\ \ \ \widehat f \delta_M(x)=\delta_N(f(x)),
$$
and $\|\widehat f\|=\Lip(f)$. Thus a bi-Lipschitz bijection $f:M\to N$ linearizes to an isomorphism $\widehat f:\F(M)\to \F(N)$ with 
$$
\norm{\widehat f}\norm{\widehat f^{-1}}\le \dist(f).
$$
If $A\subseteq M$ contains 0, then the canonical map $\F(A)\to \F(M)$ is an isometric embedding.

\subsection{Retractional bases}
A $K$-{\em retractional basis} on a countable pointed metric space $M=\set{0, x_1, x_2, \ldots}$ is a sequence of $K$-Lipschitz retractions $r_n:M\to A_n=\set{0, x_1,\ldots, x_n}$ satisfying
$$
r_mr_n=r_nr_m=r_{\min\{m,n\}}.
$$

The linearizations $\widehat r_n$ are then the partial sum projections of a Schauder basis of $\F(M)$ with constant $K$ \cite[Theorem 13]{HN}. This notion was developed in \cite{HN} and further studied in \cite{HM}.

Below we will define a parent map $p$ on $G_R$ which takes every coordinate one step closer to the zero, and thus defines a rooted tree on $G_R$ of height $R$. We will consider vectors (molecules)
$$
\delta(x_n)-\delta(px_n)
$$
which can be thought of an edge on the tree and hence we will call them {\em edge vectors}.

\section{A finite height retraction system}

Fix $R\in\N$. Define the parent map $p:G_R\to G_R$ coordinatewise by
$$
(px)_i=\sgn(x_i)(|x_i|-1)_+.
$$
Then $p(0)=0$, and
\begin{equation}\label{eq:parent-properties}
 \norm{px-py}_\infty\leq\norm{x-y}_\infty,
 \qquad
 \norm{x-px}_\infty=1\quad(x\neq0),
 \qquad
 \norm{px}_\infty=\norm{x}_\infty-1\quad(x\neq0).
\end{equation}

A useful interpretation is that the directed graph with vertex set $G_R$ and an edge from $px$ to $x$ for
every $x\neq0$ is a countably branching rooted tree with root $0$.  The unique
path from $0$ to $x$ has length $\norm{x}_\infty$.  Thus the tree has height
$R$, and $p^R x=0$ for every $x\in G_R$.

For $x,y\in G_R$, write $y\preceq x$ if for every coordinate $i$, either
$y_i=0$, or $x_i y_i>0$ and $\abs{y_i}\leq\abs{x_i}$.  We define {\em the down set} of $x$ as
$$
       D(x)=\set{y\in G_R:y\preceq x},
$$
which is finite.

\begin{lemma}\label{lem:predecessor}
If $a, z\in G_R$ and $d(a,z)\le 1$, then $pz\preceq a$. In particular, 
$$
\set{pz: z\in G_R, d(a,z)\le 1}\subseteq D(a).
$$
\end{lemma}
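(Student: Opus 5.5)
The relation $\preceq$ is defined coordinatewise, so I plan to verify $pz\preceq a$ one coordinate at a time. Fix $a,z\in G_R$ with $\norm{a-z}_\infty\le 1$ and fix an index $i$. Then $a_i,z_i\in\Z$ satisfy $\abs{a_i-z_i}\le1$, and
$$
(pz)_i=\sgn(z_i)(\abs{z_i}-1)_+ .
$$
Either $(pz)_i=0$, and the condition for $\preceq$ at coordinate $i$ holds trivially, or $(pz)_i\neq0$. I will show that in the second case $a_i(pz)_i>0$ and $\abs{(pz)_i}\le\abs{a_i}$.

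In the second case we have $\abs{z_i}\ge2$, and $(pz)_i$ has the sign of $z_i$ with $\abs{(pz)_i}=\abs{z_i}-1$. Since $\abs{a_i-z_i}\le 1$ and $\abs{z_i}\ge 2$, the integer $a_i$ lies in $[z_i-1,z_i+1]$. This interval does not contain $0$ and lies entirely on the side of $z_i$. Hence $\sgn(a_i)=\sgn(z_i)$, which gives $a_i(pz)_i>0$. Moreover $\abs{a_i}\ge\abs{z_i}-1=\abs{(pz)_i}$, which is the required inequality. Since every coordinate satisfies the definition, $pz\preceq a$.

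The ``in particular'' statement is just a restatement: each $pz$ with $d(a,z)\le1$ lies in $G_R$, because $\norm{pz}_\infty\le\norm{z}_\infty$ or $pz=0$ by \eqref{eq:parent-properties}. Since also $pz\preceq a$, it belongs to $D(a)$ by the definition of the down set. There is no real obstacle here. The only care needed is the case split on $\abs{z_i}\le1$ versus $\abs{z_i}\ge2$, and the observation that the sign agreement uses integrality together with $\abs{z_i}\ge2$.
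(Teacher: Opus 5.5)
Your proof is correct and is essentially the paper's coordinatewise verification. The paper splits on the sign of $a_i$ and lists the possible values of $(pz)_i$, whereas you split on whether $(pz)_i$ vanishes (i.e.\ $\abs{z_i}\le1$ versus $\abs{z_i}\ge2$); this is only a difference in organization.
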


\begin{proof}
Fix a coordinate $i$. If $a_i=0$, then $z_i\in \set{-1, 0, 1}$ and hence $(pz)_i=0$. If $a_i>0$, then $z_i\in \set{a_i-1, a_i, a_i+1}$, so
$$
(pz)_i\in\set{(a_i-2)_+, a_i-1, a_i}.
$$
These numbers are nonnegative and at most $a_i$. The case $a_i<0$ is similar.
\end{proof}
\noindent{\bf Enumeration.} Choose an enumeration 

\begin{equation}\label{eq:enumeration}
       G_R=\set{0,x_1,x_2,\ldots}
\end{equation}
with the property
\begin{equation}\label{eq:downset-order}
 y\in D(x_n)\setminus\set{x_n}
 \quad\Longrightarrow\quad
 y\in\set{0,x_1,\ldots,x_{n-1}}.
\end{equation}
Such an enumeration exists. Start with an arbitrary enumeration $(q_j)$ of $G_R$. At each stage $j$, add all points of $D(q_j)$ which have not yet been listed, in nondecreasing order of their $\ell_1$-norm $\|x\|_1=\sum_i |x_i|$ (and choose arbitrarily if their norms are equal). This works because if $y\preceq x$ and $x\neq y$, then $\|y\|_1<\|x\|_1$. Therefore, every proper predecessor of a point is listed before that point. Put

$$
A_0=\set{0},\ \ \ A_n=\set{0, x_1, \ldots, x_n}\ \ (n\ge 1).
$$
Then every $A_n$ is downward closed: if $x\in A_n$ and $y\preceq x$, then
$y\in A_n$.  Moreover, every $q_j$ is eventually inserted.

\noindent{\bf Retractions.} For $x\in G_R$, define
\begin{equation}\label{eq:retraction}
       k_n(x)=\min\set{k\geq0:p^k x\in A_n},
       \qquad
       r_n(x)=p^{k_n(x)}x.
\end{equation}
The minimum exists because $p^R x=0$.  Moreover,
$p(x_n)\in A_{n-1}$.

The map $r_n$ fixes $A_n$, so it is a retraction. Moreover, it is clear from the definition that
\begin{equation}\label{eq:commuting}
       r_mr_n=r_nr_m=r_{\min\set{m,n}}
       \qquad(m,n\geq0).
\end{equation}

We have

\begin{proposition}\label{prop:two-lipschitz}
Every $r_n$ is $2$-Lipschitz.  If $R=1$, every $r_n$ is $1$-Lipschitz.
\end{proposition}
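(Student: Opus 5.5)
The plan is to reduce to pairs of points at distance one and then compare the two descending parent paths step by step, using Lemma~\ref{lem:predecessor} together with the fact that each $A_n$ is downward closed.

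\emph{Reduction to unit distances.} The metric $d$ on $G_R$ is a path metric with unit steps. Given $x,y\in G_R$ with $d(x,y)=m$, move every coordinate of $x$ toward the corresponding coordinate of $y$ by one unit per step, stopping once it agrees. This gives points $x=z_0,z_1,\ldots,z_m=y$ with $d(z_{j-1},z_j)\le 1$. Each intermediate coordinate lies between $x_i$ and $y_i$, so every $z_j$ stays in $G_R$, and only finitely many coordinates are ever nonzero. By the triangle inequality it therefore suffices to show $d(r_nx,r_ny)\le 2$ whenever $d(x,y)\le 1$, and $\le 1$ in the case $R=1$.

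\emph{Main step.} Fix $x,y$ with $d(x,y)\le1$, and put $k=k_n(x)$ and $l=k_n(y)$. By symmetry we may assume $k\le l$. By \eqref{eq:parent-properties} the map $p$ is $1$-Lipschitz, so $d(p^jx,p^jy)\le 1$ for every $j$. Set $a=p^kx=r_nx\in A_n$ and $z=p^ky$, so that $d(a,z)\le1$. Lemma~\ref{lem:predecessor} gives $pz\in D(a)$. Since $A_n$ is downward closed, $pz=p^{k+1}y\in A_n$, and hence $l\le k+1$. There are two cases.
\begin{itemize}
\item If $l=k$, then $r_ny=z$ and $d(r_nx,r_ny)\le 1$.
\item If $l=k+1$, then $r_ny=pz$. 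Using $d(z,pz)\le 1$ from \eqref{eq:parent-properties},
$$
d(r_nx,r_ny)\le d(a,z)+d(z,pz)\le 2.
$$
\end{itemize}
This proves the $2$-Lipschitz bound.

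\emph{The case $R=1$.} Here $px=0$ for every $x\in G_1$, so $k,l\in\{0,1\}$. The only case that needs attention is $l=k+1$, which forces $k=0$ and $l=1$. Then $r_nx=x$ and $r_ny=0$, so
$$
d(r_nx,r_ny)=\norm{x}_\infty\le 1.
$$

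The only delicate point is the bound $l\le k+1$, i.e.\ that the two parent paths leave $A_n$ at most one step apart. This is exactly what Lemma~\ref{lem:predecessor} combined with the downward closedness of $A_n$ provides. The rest is the triangle inequality together with checking that the unit-step path used in the reduction stays inside $G_R$.
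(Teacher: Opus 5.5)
Your proof is correct and is essentially the paper's argument: the same reduction to unit-distance pairs along a coordinatewise path, and the same use of Lemma~\ref{lem:predecessor} together with downward closedness of $A_n$ to force $k_n(y)\le k_n(x)+1$. The only difference is the case $R=1$. The paper handles it directly, noting that listed points are fixed, unlisted points go to $0$, and $G_1$ is $1$-separated with every nonzero point of norm one. You instead rerun the unit-step case analysis, and both arguments work.
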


\begin{proof}
First suppose $d(x,y)\le 1$. Put $i=k_n(x)$ and $j=k_n(y)$, and assume $i\le j$. If $i=j$, the first inequality in
\eqref{eq:parent-properties} gives
$$
       d(r_nx,r_ny)=d(p^ix,p^iy)\leq1.
$$
Suppose $i<j$. Let $a=p^ix\in A_n$ and $z=p^iy$. Then $d(a,z)\le 1$. By Lemma~\ref{lem:predecessor}, $pz\in D(a)\subseteq A_n$. So we must have $j=i+1$. Thus
$$
d(r_nx, r_ny)=d(a,pz)\le d(a,z)+d(z,pz)\le 2.
$$

For arbitrary $x,y$, put $k=d(x,y)\in\N\cup\set{0}$.  There is a path
$x=z_0,z_1,\ldots,z_k=y$ in $G_R$ with
$d(z_{\ell-1},z_\ell)\leq1$: at each step, move every coordinate which has
not reached its target one unit toward that target.  Applying the preceding
estimate along this path yields
$$
       d(r_nx,r_ny)\leq2d(x,y).
$$

If $R=1$, every nonzero point has parent zero.  The map $r_n$ fixes the
listed points and sends each unlisted point to zero.  Since distinct points of
$G_1$ are at distance at least one and every nonzero point has norm one, this
map is $1$-Lipschitz.

\end{proof}

\section{Edge basis equivalent to $\ell_1$}

Fix $R\in\N$, and for $n\ge 1$, let

$$P_n=\widehat r_n:\F(G_R)\to \F(G_R).$$

By Proposition~\ref{prop:two-lipschitz},
$$
\sup_n\norm{P_n}\leq2,
$$
and the supremum is one when $R=1$.

Since $G_R$ is bounded and uniformly discrete, it is well-known that $\F(G_R)$ is isomorphic to $\ell_1$ (cf. \cite{Weaver, Kalton2004}). The point of the following is that the sequence of edge vectors $(\delta(x_n)-\delta(px_n))_n$ form a Schauder basis with constant $2$ {\em independent} of $R$, and it is $2R$-equivalent to the unit vector basis of $\ell_1$. This is a particularly simple case of the retractional bases constructed for several classes of grids in \cite{HN, HM}.

\begin{proposition}\label{prop:parent-edge-basis}
With respect to the enumeration \eqref{eq:enumeration}, the edge vectors (molecules)
$$
b_n=\delta(x_n)-\delta(px_n),
\qquad n\geq1,
$$
form a normalized Schauder basis of $\F(G_R)$ with basis constant at most two.  When $R=1$,
the basis is monotone.  Moreover, for every finite scalar sequence
$(a_j)$, we have
\begin{equation}\label{eq:l1-equivalence}
\frac{1}{2R}\sum_j\abs{a_j}
\leq\Big\|\sum_j a_jb_j\Big\|
\leq\sum_j\abs{a_j}.
\end{equation}
\end{proposition}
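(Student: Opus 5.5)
The plan has four steps. First identify $P_n$ as the partial sum projections of the $b_j$. Then get the basis constant from Proposition~\ref{prop:two-lipschitz}. Next read off the upper estimate in \eqref{eq:l1-equivalence} from the triangle inequality. Finally prove the lower estimate by duality, using an explicit Lipschitz test function.

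\textbf{Partial sums.} We claim $P_n b_j = b_j$ for $j\le n$ and $P_n b_j = 0$ for $j>n$. For $j\le n$, both $x_j$ and $px_j$ lie in $A_n$, since $A_n$ is downward closed; hence both are fixed by $r_n$. For $j>n$, $x_j\notin A_n$, so $k_n(x_j)\ge1$, and therefore $r_n(x_j)=r_n(px_j)$ by the definition \eqref{eq:retraction}. This gives $\widehat r_n b_j=0$.

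\textbf{Span, norm and basis constant.} Each $\delta(x_n)$ is a telescoping sum of the $b_j$ along its parent path, and every point on that path is listed before $x_n$. So the span of the $b_j$ equals the span of the Dirac masses, which is dense in $\F(G_R)$. Each $b_n$ has norm $d(x_n,px_n)=1$ by \eqref{eq:parent-properties}. The $b_j$ are linearly independent, since $b_n\in\operatorname{span}\{\delta(x_1),\dots,\delta(x_n)\}$ has a nonzero $\delta(x_n)$-coefficient. Combining this with the partial sum identity and $\sup_n\|P_n\|\le 2$ (or $\le1$ when $R=1$), the standard criterion gives a Schauder basis with constant at most $2$, monotone when $R=1$. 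This is also the content of \cite[Theorem 13]{HN}.

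\textbf{Upper estimate.} The right-hand inequality in \eqref{eq:l1-equivalence} is immediate from $\|b_j\|=1$ and the triangle inequality.

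\textbf{Lower estimate (main step).} For a finite sequence $(a_j)$ we want $f\in\Lip_0(G_R)$ with $\|f\|_{\Lip}\le 2R$ and $f(x_j)-f(px_j)=\sgn(a_j)$ for each $j$. Then pairing gives
\[
\Big\|\sum a_jb_j\Big\|\ge \frac{1}{2R}\sum|a_j|.
\]
Using the tree structure, define $f(0)=0$ and $f(x)=f(px)+\varepsilon(x)$, with $\varepsilon(x_j)=\sgn(a_j)$ and $\varepsilon\in\{-1,0,1\}$ arbitrary elsewhere. Thus $f(x)$ is a sum of at most $\|x\|_\infty\le R$ signs along the path from $0$ to $x$, so $|f|\le R$ everywhere. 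Distinct points of $G_R$ are at distance at least $1$, so for $x\ne y$,
\[
|f(x)-f(y)|\le 2R\le 2R\,d(x,y).
\]
Hence $\|f\|_{\Lip}\le 2R$, and the lower bound follows. I expect this step to be the only nonformal point: one has to notice that the crude bound $|f|\le R$, together with uniform discreteness, already gives the Lipschitz constant. The edge signs can therefore be chosen completely freely, and no compatibility between neighbouring edges is needed.
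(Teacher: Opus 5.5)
Your proof is correct. The partial-sum identity, the span and independence argument, and the upper bound are the same as in the paper. The difference is in the lower estimate of \eqref{eq:l1-equivalence}.

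The paper works in Dirac coordinates. It writes $\mu=\sum_j a_jb_j=\sum_k c_k\delta(x_k)$ and tests against the $1$-Lipschitz function $x_k\mapsto\frac12\sgn(c_k)$, which gives $\|\mu\|\ge\frac12\sum_k|c_k|$. It then expands each $\delta(x_k)$ along its parent path, of length at most $R$, to get $\sum_j|a_j|\le R\sum_k|c_k|$.

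You instead build a single norming function directly in edge coordinates. You prescribe the increments $f(x_j)-f(px_j)=\sgn(a_j)$ along the tree and use $|f|\le R$ together with $1$-separation.

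Both arguments rest on the same two facts. On a $1$-separated set, a function vanishing at $0$ with $|g|\le M$ is $2M$-Lipschitz. Root paths have length at most $R$. Your argument is essentially the transpose of the paper's change-of-basis estimate: pairing your $f$ with $\mu$ gives $\sum_k c_kf(x_k)$, where $f(x_k)$ is the sum of the prescribed signs along the path to $x_k$.

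What each route buys:
\begin{itemize}
\item Your version is slightly more economical. It never compares the two coefficient expansions of $\mu$, and it shows directly that the coefficient map $\mu\mapsto(a_j)$ into $\ell_1$ has norm at most $2R$.
\item The paper's version splits the constant as $2\cdot R$. The factor $2$ is the general $\ell_1$ lower estimate for Dirac masses on any $1$-separated space. The factor $R$ is the purely combinatorial cost of passing from Diracs to parent edges.
\end{itemize}
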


\begin{proof}
Since $px_j\in A_{j-1}$, $b_j$ is the molecule associated with the parent edge $(px_j, x_j)$. That is, it is the new edge introduced when $x_j$ is added the to $A_{j-1}$ (this is the reason for the formula $b_j=\delta(x_j)-\delta(px_j)$). We check that 
$$
P_nb_j=
\begin{cases}
b_j,&j\leq n,\\
0,&j>n.
\end{cases}
$$
If $j\leq n$, then $r_n$ fixes both $x_j$ and $px_j$.  Suppose that $j>n$,
and let
$$
k=\min\set{l\geq1:p^lx_j\in A_n}.
$$
Then $r_nx_j=p^kx_j$.  The first point on the parent chain of $px_j$ which
belongs to $A_n$ is the same point, since
$$
p^{k-1}(px_j)=p^kx_j.
$$
Thus $r_n(px_j)=r_nx_j$, and consequently $P_nb_j=0$. Thus $P_n$'s are $n$th partial sum operators, and recall that $\norm{P_n}\le 2$.

Note that
$$
\delta(x_j)=\sum_{l=0}^{\norm{x_j}_\infty-1}
\bigl(\delta(p^lx_j)-\delta(p^{l+1}x_j)\bigr),
$$
and every summand on the right is one of the vectors $b_i$ with $i\leq j$.
Therefore
$$
\operatorname{span}\set{b_1,\ldots,b_n}
=\operatorname{span}\set{\delta(x_1),\ldots,\delta(x_n)}
$$
for every $n$.  In particular, $(b_j)$ is linearly independent and has dense
linear span in $\F(G_R)$.

It remains to prove \eqref{eq:l1-equivalence}.  Let $(a_j)$ be finitely
supported and write
$$
\mu=\sum_j a_jb_j=\sum_k c_k\delta(x_k).
$$
Since $G_R$ is $1$-separated, the function $f:G_R\to\R$ defined by
$$
f(0)=0,
\qquad
f(x_k)=\frac12\sgn(c_k)
$$
is $1$-Lipschitz.  Hence
$$
\norm{\mu}\geq\angles{\mu,f}=\frac12\sum_k\abs{c_k}.
$$
On the other hand, each Dirac vector $\delta(x_k)$ is the sum of the
parent-edge molecules along its path from $0$ to $x_k$, and this path has
length at most $R$.  Expanding each $\delta(x_k)$ in this way and comparing
the unique $(b_j)$-expansions of $\mu$ gives
$$
\sum_j\abs{a_j}\leq R\sum_k\abs{c_k}\leq2R\norm{\mu}.
$$
Since $\norm{b_j}=d(x_j,px_j)=1$, we also have
$$
\norm{\mu}\leq\sum_j\abs{a_j}.
$$
This proves \eqref{eq:l1-equivalence}.

\end{proof}

Since the basis is equivalent to the unit vector basis of $\ell_1$ it is boundedly complete. The basis constant $2$ implies that $\F(G_R)$ is $2$-isomorphic to the dual of the  space $[b^*_n]$ (c.f., Proposition 1.b.4 of \cite{LT}). Since this is critical for the main result we record this as

\begin{corollary}\label{cor:local-upper}
For every $R\geq1$, the space $\F(G_R)$ is $2$-isomorphic to a separable dual space. When $R=1$, it is isometric to a separable dual space.\end{corollary}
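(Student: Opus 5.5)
The plan is to apply the standard duality theorem for boundedly complete bases, quantified carefully enough to get the constant $2$ and the isometric statement when $R=1$.

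By Proposition~\ref{prop:parent-edge-basis}, $(b_n)$ is a Schauder basis of $\F(G_R)$ with partial sum projections $P_n=\widehat r_n$ and $\sup_n\|P_n\|\le 2$. By \eqref{eq:l1-equivalence} it is equivalent to the unit vector basis of $\ell_1$, hence boundedly complete. Let $(b_n^*)\subset \F(G_R)^*$ be the biorthogonal functionals. Set $Y=[b_n^*]$, their closed span, which is separable. Define $J:\F(G_R)\to Y^*$ by $\langle J\mu,y\rangle=\langle y,\mu\rangle$.

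First, $\|J\mu\|\le\|\mu\|$ is immediate.

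For the lower bound, fix $\mu$ and a norming $f\in\F(G_R)^*$ with $\|f\|=1$ and $\langle f,\mu\rangle=\|\mu\|$. The functional $P_n^*f=\sum_{j\le n}\langle f,b_j\rangle b_j^*$ lies in $Y$ and has norm at most $2$. Hence
\[
\langle J\mu, P_n^*f\rangle=\langle f,P_n\mu\rangle\to\|\mu\|,
\]
which gives $\|J\mu\|\ge\tfrac12\|\mu\|$.

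Next, $J$ is onto. Given $\phi\in Y^*$, consider $\mu_n=\sum_{j\le n}\phi(b_j^*)b_j$. For $y\in Y$, $\langle y,\mu_n\rangle=\phi(P_n^*y)$, and for $y$ in the span of the $b_j^*$ this equals $\phi(y)$ once $n$ is large. Restricting to $y\in Y$ thus gives $\|J\mu_n\|\le 2\|\phi\|$, since $\|P_n^*\|\le2$. By the lower bound already proved, $\|\mu_n\|\le 4\|\phi\|$. So the partial sums are bounded, and bounded completeness makes $\sum\phi(b_j^*)b_j$ converge to some $\mu$ with $J\mu=\phi$.

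This step is the only one with real content, and it is exactly \cite[Proposition 1.b.4]{LT}. The same argument can be rephrased with the equivalent norm $|||\mu|||=\sup_n\|P_n\mu\|$, under which the basis is monotone and $J$ is an isometry onto $Y^*$. Since $\|\mu\|\le|||\mu|||\le 2\|\mu\|$, this gives the $2$-isomorphism to the separable dual $Y^*$.

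For $R=1$, Proposition~\ref{prop:two-lipschitz} gives $\|P_n\|\le1$, so the basis is monotone. Then $P_n^*f$ has norm at most $1$ in the lower bound, so $J$ is isometric. Equivalently, the renorming $|||\cdot|||$ coincides with $\|\cdot\|$. Either way, $\F(G_1)$ is isometric to $Y^*$.

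I expect no genuine obstacle here. The only care needed is the constant bookkeeping, namely that the biorthogonal-span construction loses exactly the basis constant $\sup_n\|P_n\|\le2$ and nothing more.
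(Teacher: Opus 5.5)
Your argument is correct and follows the paper's route. The paper simply cites Proposition 1.b.4 of \cite{LT} for the boundedly complete basis $(b_n)$ with $\sup_n\|P_n\|\le 2$ (and $\le 1$ when $R=1$); you write out that proposition's proof with the constants tracked, obtaining $\tfrac12\|\mu\|\le\|J\mu\|\le\|\mu\|$ and surjectivity of $J$. One small point to state explicitly: the dual $[b_n^*]^*$ itself (not just $[b_n^*]$) is separable, which is immediate because $J$ maps the separable space $\F(G_R)$ onto it.
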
 

\section{The main theorem}
The main theorem now follows easily from Kalton's decomposition theorem (Proposition 4.3 of \cite{Kalton2004}). For a pointed metric space $M$, write
$$
       M_k=\set{x\in M:d(x,0)\leq2^k}
       \qquad(k\in\Z).
$$
Kalton's annular decomposition states that, for every $\eta>0$, there is a
linear embedding
\begin{equation}\label{eq:kalton-decomp}
 S_\eta:\F(M)\longrightarrow
       \left(\bigoplus_{k\in\Z}\F(M_k)\right)_{\ell_1}
\end{equation}
with
\begin{equation}\label{eq:kalton-bounds}
       \norm{\mu}\leq\norm{S_\eta\mu}
       \leq(1+\eta)\norm{\mu}
       \qquad(\mu\in\F(M)).
\end{equation}

Thus we have almost isometric embeddings
$$
\F(G)\longrightarrow 
W:=\left(\bigoplus_{n\geq0}\F(G_{2^n})\right)_{\ell_1}.
$$

For each $n\geq0$, let $(b_{n,j})_{j\geq1}$ denote the boundedly complete basis of $\F(G_{2^n})$ constructed above, and let \[ E_n= \overline{\operatorname{span}}\{b_{n,j}^*:j\geq1\} \subseteq \F(G_{2^n})^* \] be its  predual. By Corollary~\ref{cor:local-upper}, the canonical map from $\F(G_{2^n})$ onto $E_n^*$ is an isomorphism with distortion at most two. Put \[ E=\left(\bigoplus_{n\geq0}E_n\right)_{c_0}, \qquad Z=E^* = \left(\bigoplus_{n\geq0}E_n^*\right)_{\ell_1}. \] Then $Z$ is a separable dual Banach space and \[ W= \left(\bigoplus_{n\geq0}\F(G_{2^n})\right)_{\ell_1} \] is $2$-isomorphic to $Z$.

Thus we have

\begin{theorem}\label{thm:global-free}
There is a separable dual Banach space $Z$ such that, for every $\eta>0$,
there is a linear embedding
$$
       T_\eta:\F(G)\longrightarrow Z
$$
with
\begin{equation}\label{eq:global-free-bounds}
       \norm{\mu}\leq\norm{T_\eta\mu}
       \leq2(1+\eta)\norm{\mu}.
\end{equation}
In particular, the infimum of the coarse-Lipschitz distortions of embeddings of $c_0$ into separable dual spaces is at most $2$.
\end{theorem}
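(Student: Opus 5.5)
The plan is to compose three maps: Kalton's annular embedding $S_\eta$, a restriction to the pieces $G_{2^n}$, and the coordinatewise canonical isomorphisms $\F(G_{2^n})\to E_n^*$ from Corollary~\ref{cor:local-upper}. Two points need attention.

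First, I will take $M=G$ in \eqref{eq:kalton-decomp}. Since $G$ is $1$-separated, $M_k=\{0\}$ for $k<0$, so those summands vanish and $S_\eta$ lands in $W=(\bigoplus_{n\ge0}\F(G_{2^n}))_{\ell_1}$ with the bounds \eqref{eq:kalton-bounds}.

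Next, for each $n$ let $J_n:\F(G_{2^n})\to E_n^*$ be the canonical map, $J_n\mu=(\langle b_{n,j}^*,\mu\rangle)_j$ viewed as a functional on $E_n$. Proposition 1.b.4 of \cite{LT}, applied to a boundedly complete basis with constant $K\le 2$ (Proposition~\ref{prop:parent-edge-basis}), gives
\[
K^{-1}\norm{\mu}\le\norm{J_n\mu}\le\norm{\mu}.
\]
To get the normalization in \eqref{eq:global-free-bounds} I will rescale and set $U_n=2J_n$, so that $\norm{\mu}\le\norm{U_n\mu}\le 2\norm{\mu}$. I should double-check the direction of these inequalities. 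It is the standard fact that the embedding of a space with basis into the dual of the span of its biorthogonals satisfies $\norm{J\mu}\le\norm{\mu}\le K\norm{J\mu}$. Since the $\ell_1$-sum of the $U_n$ acts coordinatewise, the operator $U=\bigoplus U_n:W\to Z$ satisfies $\norm{w}\le\norm{Uw}\le2\norm{w}$, and $U$ is onto $Z$ by bounded completeness. Here $Z=E^*$ with $E=(\bigoplus E_n)_{c_0}$ separable, so $Z$ is a separable dual.

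Setting $T_\eta=U\circ S_\eta$ then gives $\norm{\mu}\le\norm{S_\eta\mu}\le\norm{T_\eta\mu}\le2\norm{S_\eta\mu}\le2(1+\eta)\norm{\mu}$, which is \eqref{eq:global-free-bounds}. Note that $Z$ does not depend on $\eta$.

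For the coarse-Lipschitz consequence, define $f=T_\eta\circ\delta_G\circ Q:c_0\to Z$. Because $\delta_G$ is an isometry,
\[
\norm{x-y}-1\le\norm{f(x)-f(y)}\le2(1+\eta)(\norm{x-y}+1).
\]
This is \eqref{eq:cl-bounds} with $a=1$ and $L=2(1+\eta)$, so $\dist_{CL}(f)\le2(1+\eta)$. Letting $\eta\to0$ shows that the infimum is at most $2$.

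The main obstacle I expect is bookkeeping in Kalton's decomposition. Its summands are $\F(M_k)$ with $M_k$ the ball of radius $2^k$ in the $\ell_\infty$ metric, which here is exactly $G_{2^k}$. The uniformity in $n$ of the basis constant $2$ is what makes $U$ bounded; the $2R$-equivalence with $\ell_1$ is not uniform and must not be used.
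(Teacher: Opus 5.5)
Your proposal is correct and follows the paper's proof: you apply Kalton's annular embedding into $W$ (the negative-index summands vanish because $G$ is $1$-separated), then the coordinatewise canonical maps $\F(G_{2^n})\to E_n^*$ rescaled by $2$, and finally compose with $\delta_G\circ Q$ to get the coarse-Lipschitz consequence. One small correction: separability of $Z$ does not follow from separability of $E$; it follows from $U$ mapping the separable space $W$ onto $Z$, equivalently from each $E_n^*\cong\F(G_{2^n})$ being separable.
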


It was pointed out to us by Rub\'en Medina that the spaces $\mathcal F(G)$ and $W$ are actually isomorphic.

\begin{corollary}\label{cor:FG-separable-dual} $\F(G)$ is isomorphic to a separable dual Banach space. \end{corollary}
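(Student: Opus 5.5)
We will show that $\F(G)$ is isomorphic to $W=\big(\bigoplus_{n\ge0}\F(G_{2^n})\big)_{\ell_1}$ by Pe{\l}czy\'nski's decomposition method. Then the statement follows at once, since $W$ is $2$-isomorphic to the separable dual space $Z$ built before Theorem~\ref{thm:global-free}. Pe{\l}czy\'nski's method requires three ingredients: (i) $\F(G)$ is isomorphic to a complemented subspace of $W$; (ii) $W$ is isomorphic to a complemented subspace of $\F(G)$; (iii) $W\cong(\bigoplus W)_{\ell_1}$, i.e.\ $W\cong \ell_1(W)$.

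For (ii), each $G_{2^n}$ is a subset of $G$ containing $0$, so $\F(G_{2^n})$ embeds isometrically in $\F(G)$. It is also $2$-complemented there, via the linearization of a $2$-Lipschitz retraction of $G$ onto $G_{2^n}$ (coordinatewise truncation to $[-2^n,2^n]$ is even $1$-Lipschitz). We still need to place the whole $\ell_1$-sum complementably in $\F(G)$. The plan is to use disjoint translated copies of the $G_{2^n}$ placed far apart. Choose centers $c_n\in G$ with $\|c_n\|_\infty$ growing very fast, for instance $c_n=4^{n+2}e_1$, and set $H_n=c_n+G_{2^n}$. On the union $\{0\}\cup\bigcup_n H_n$, which is well separated relative to the diameters, the free space is isomorphic to $\big(\bigoplus_n \F(H_n\cup\{c_n\})\big)_{\ell_1}$; this is the standard estimate for far-apart pieces, as in Kalton's lemmas. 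One then needs a Lipschitz retraction, or at least a bounded linear projection, from $\F(G)$ onto this subspace. Here we would invoke the Aliaga--Medina result, which the introduction credits with this step. Alternatively, we would work in $W$ directly by using Kalton's decomposition with the roles reversed: \eqref{eq:kalton-decomp} already gives a complemented embedding of $\F(G)$ into $W$, and the complementation in Kalton's proof comes from the partition-of-unity operators.

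For (i), we need that the image of Kalton's map $S_\eta$ is complemented in $W$. We expect to obtain this from Kalton's proof, where the left inverse is the summation map $\big(\mu_k\big)\mapsto \sum_k \iota_k\mu_k$. This map is bounded from the $\ell_1$-sum to $\F(G)$ because each inclusion $\iota_k:\F(G_{2^k})\to\F(G)$ has norm one. Thus $\iota\circ S_\eta=\mathrm{id}$, and $S_\eta\iota$ is a bounded projection of $W$ onto $S_\eta(\F(G))$. For (iii), note that $\ell_1(W)$ is an $\ell_1$-sum of countably many copies of each $\F(G_{2^n})$. Moreover, $\F(G_{2^n})$ is $2$-complemented in $\F(G_{2^m})$ for $m\ge n$ by truncation. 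Hence $\ell_1(W)$ can be absorbed into $W$ by regrouping, with each copy of $\F(G_{2^n})$ sent into a distinct later summand, so that $\ell_1(W)$ is complemented in $W$, and $W$ is trivially complemented in $\ell_1(W)$. Pe{\l}czy\'nski's method in its $\ell_1$ form then gives $W\cong\ell_1(W)$.

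With (i)--(iii) in hand, the standard decomposition argument runs as follows. Write $\F(G)\oplus A\cong W$ and $W\oplus B\cong \F(G)$. Then
\[
\F(G)\oplus W\cong \F(G)\oplus \ell_1(W)\cong \ell_1(W)\cong W .
\]
Also $W\oplus \F(G)\cong W\oplus W\oplus B\cong W\oplus B\cong\F(G)$. Hence $\F(G)\cong W\cong Z$.

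The main obstacle is (ii): showing that the full $\ell_1$-sum, and not merely each summand uniformly, is complemented in $\F(G)$. This is the step where the Aliaga--Medina decomposition is needed, and we would invoke it rather than reprove it.
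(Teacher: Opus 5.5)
Your proof is correct and has the same architecture as the paper's: Kalton's decomposition, with the summation map as left inverse, makes $\F(G)$ complemented in $W$. Aliaga--Medina's Proposition 3.1 makes $W$ complemented in $\F(G)$. Pe{\l}czy\'nski's method then finishes.

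The genuine difference is the self-similarity input. The paper cites $\F(G)\cong\big(\bigoplus_n\F(G)\big)_{\ell_1}$ (Aliaga--Medina, Theorem 3.2, or H\'ajek--Novotn\'y, Theorem 8). You instead prove $W\cong\ell_1(W)$ directly, by regrouping through an injection $\phi(k,n)\ge n$ and the clipping retractions $G_{2^m}\to G_{2^n}$. The decomposition method needs only one of the two spaces to be isomorphic to its $\ell_1$-sum, so this is legitimate and removes one external citation.

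The trade-off is worth noticing, because the paper's input would make your ``main obstacle'' (ii) trivial. Each $\F(G_{2^n})$ is $1$-complemented in $\F(G)$ via $\widehat{q_{2^n}}$. Hence $W$ is $1$-complemented in $\big(\bigoplus_n\F(G)\big)_{\ell_1}$, and once $\F(G)\cong\ell_1(\F(G))$ you need neither the far-apart translates nor Proposition 3.1 of Aliaga--Medina. By contrast, with your choice of self-similarity, step (ii) genuinely requires that proposition. No Lipschitz retraction of $G$ onto $\{0\}\cup\bigcup_n H_n$ can exist, since the gaps between the $H_n$ grow without bound while $G$ is chained by unit steps. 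So the projection must come from the weak$^*$ extension operators.

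Three small points. First, the ``Alternatively'' sentence in your treatment of (ii) is backwards: Kalton's decomposition gives (i), not (ii), so (ii) rests entirely on the citation. Second, when you invoke Aliaga--Medina you should verify its hypotheses, as the paper does. $G$ is homogeneous and unbounded, and its closed balls are uniformly weak$^*$ Lipschitz extendable because the translated clipping maps are $1$-Lipschitz retractions. Third, that same fact shows truncation gives $1$-complementation, not $2$-complementation.
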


\begin{proof}
This follows from the work of Aliaga and Medina
\cite{AliagaMedina2026}:

Let $S$ be a
subset of a metric space $M$ containing distinguished point 0.  $S$ is said to be \emph{weak$^*$
$C$-Lipschitz extendable in $M$} if there exists a bounded linear
extension operator
\[
E:\Lip_0(S)\longrightarrow \Lip_0(M)
\]
such that
\[
(Ef)|_S=f,\qquad \|E\|\leq C,
\]
and $E$ is pointwise-to-pointwise continuous.  Equivalently, $E$ is weak$^*$--weak$^*$
continuous. $M$ is called {\em homogeneous} if for any $x, y\in M$ there is a bijective isometry $\varphi$ on $M$ such that $\varphi(x)=y$.

The space $G$ is homogeneous and unbounded,
and its closed balls are uniformly weak$^*$ Lipschitz extendable.
Indeed, for every $R\in\mathbb N$ the coordinatewise clipping map
$$
q_R:G\longrightarrow G_R,\qquad
(q_Rx)_i=\operatorname{sgn}(x_i)\min\{|x_i|,R\},
$$
is a $1$-Lipschitz retraction.  By translation, the same is true for
every closed ball in $G$.  Thus by (the proof of) \cite[Proposition 3.1]{AliagaMedina2026} we have a complemented embedding
 
$$
\left(
\bigoplus_{n=0}^{\infty}
\mathcal F(G_{2^n})
\right)_{\ell_1}
\xhookrightarrow{c}
\mathcal F(G).
$$

We also have a complemented embedding by \cite[Lemma 1.2 (Kalton)]{AliagaMedina2026},
$$
\mathcal F(G)
\xhookrightarrow{c}
\left(
\bigoplus_{n=0}^{\infty}
\mathcal F(G_{2^n})
\right)_{\ell_1}.
$$

Moreover, by \cite[Theorem~3.2]{AliagaMedina2026}, $\F(G)\cong \left(
\bigoplus_{n}
\mathcal F(G)
\right)_{\ell_1}.$ (We remark that this also follows from \cite[Theorem 8]{HN}.)
Thus Pe{\l}czy\'nski's decomposition method yields that they are isomorphic. Thus $\F(G)\cong W\cong Z$.
\end{proof}

\section{The sharp radius-two obstruction}\label{sec:lower}
In \cite{BLPP} it was shown that $c_0$ does not coarse-Lipschitz embed into a separable dual space with distortion strictly less than $3/2$. We show that the sharp constant is $2$. 
\begin{proposition}\label{prop:radius-two}
Let $X$ be a Banach space and suppose that $f:G_2\to X^*$ satisfies
\begin{equation}\label{eq:bilip-G2}
 d(u,v)\leq\norm{f(u)-f(v)}\leq Dd(u,v)
 \qquad(u,v\in G_2)
\end{equation}
for some $D<2$.  Then $X$ contains an isomorphic copy of $\ell_1$.
\end{proposition}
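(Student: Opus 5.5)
The plan is to build a sequence $(x_n)$ in the unit ball of $X$ that is equivalent to the unit vector basis of $\ell_1$. Each $x_n$ will be an almost-norming functional for a long difference $f(2e_n)-f(-2e_n)$. The lower $\ell_1$ estimate will come from testing $\sum a_nx_n$ against differences $f(v)-f(w)$ for suitable sign vectors $v,w\in G_1$, and the gap $2-D$ will give the constant. No ultrafilter or compactness argument should be needed; the whole argument is finite-dimensional bookkeeping with \eqref{eq:bilip-G2}. Throughout, $e_n$ is the $n$th unit vector.

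\textbf{Step 1 (norming functionals).} Fix $\varepsilon>0$ with $4-\varepsilon-2D>0$, which is possible since $D<2$. For each $n$ we have $d(2e_n,-2e_n)=4$, so $\|f(2e_n)-f(-2e_n)\|\ge4$. Since $X^*$ is a dual space, its norm is attained up to $\varepsilon$ on $B_X$. So we choose $x_n\in B_X$ with
$$\angles{f(2e_n)-f(-2e_n),x_n}\ge 4-\varepsilon,$$
and we set $c_n=\angles{f(-2e_n),x_n}$.

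\textbf{Step 2 (sign vectors and the two one-sided estimates).} For $N\in\N$ and $A\subseteq\set{1,\dots,N}$, let $v_A=\sum_{i\in A}e_i-\sum_{i\le N,\,i\notin A}e_i\in G_1\subseteq G_2$. For $n\le N$ the distances to the two test points are as follows:
\begin{itemize}
\item if $n\in A$, then $d(v_A,2e_n)=1$;
\item if $n\notin A$, then $d(v_A,-2e_n)=1$.
\end{itemize}
All other coordinates of $v_A$ have absolute value $1$, so they contribute distance at most $1$. Using the upper bound in \eqref{eq:bilip-G2}, for $n\in A$ we get
$$\angles{f(v_A),x_n}\ge \angles{f(2e_n),x_n}-D\ge c_n+4-\varepsilon-D.$$
For $n\notin A$ we get
$$\angles{f(v_A),x_n}\le c_n+D.$$
So the values $\angles{f(v_A),x_n}$ separate membership $n\in A$ with a uniform gap $4-\varepsilon-2D>0$, independent of $n$ and $A$.

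\textbf{Step 3 ($\ell_1$ lower estimate).} Take a finitely supported scalar sequence $(a_n)$ with support in $\set{1,\dots,N}$. Let $A=\set{n\le N:a_n>0}$ and $A'=\set{1,\dots,N}\setminus A$, so that $v_{A'}=-v_A$. By Step 2, $\angles{f(v_A)-f(v_{A'}),x_n}\ge4-\varepsilon-2D$ for $n\in A$ and $\le-(4-\varepsilon-2D)$ for $n\in A'$. Pairing therefore gives
$$\Big\langle f(v_A)-f(v_{A'}),\sum_n a_nx_n\Big\rangle\ge(4-\varepsilon-2D)\sum_n|a_n|.$$
Since $d(v_A,v_{A'})\le2$, we have $\|f(v_A)-f(v_{A'})\|\le2D$. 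Hence
$$\frac{4-\varepsilon-2D}{2D}\sum|a_n|\le\Big\|\sum a_nx_n\Big\|\le\sum|a_n|,$$
so $(x_n)$ is equivalent to the unit vector basis of $\ell_1$, and $X\supseteq\ell_1$.

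The main obstacle is the choice of configuration in Step 2. One needs a single functional $x_n$, indexed only by $n$, whose values on a large family of images $f(v_A)$ split according to whether $n\in A$. The delicate point is that the two thresholds $c_n+4-\varepsilon-D$ and $c_n+D$ must be separated exactly when $D<2$. This works because each $v_A$ sits at distance $1$ from whichever of $\pm2e_n$ matches its sign at coordinate $n$, while $\pm2e_n$ are at distance $4$ from each other. The remaining computations are routine triangle inequalities.
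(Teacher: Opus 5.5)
Your proof is correct, and it follows a genuinely more elementary route than the paper's. The configuration is the same: norming vectors $x_n$ for $f(2e_n)-f(-2e_n)$, sign vectors $v_A$ and $-v_A$ in $G_1$ at distance one from the matching point $\pm 2e_n$, and the gap $\alpha=4-\varepsilon-2D$. The difference lies in how you conclude.

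The paper takes weak$^*$ limits along a nonprincipal ultrafilter to obtain functionals $z_A^*$ for arbitrary, possibly infinite, $A\subseteq\N$. It deduces that no subsequence of $(x_k)$ is weakly Cauchy, and then invokes Rosenthal's $\ell_1$ theorem to extract a subsequence equivalent to the $\ell_1$ basis.

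You observe instead that the lower $\ell_1$ estimate only involves finitely many indices at a time. So for a finitely supported $(a_n)$ you take $A=\set{n:a_n>0}$ and pair $\sum a_nx_n$ directly with $f(v_A)-f(-v_A)$, whose norm is at most $2D$. This removes both the ultrafilter and Rosenthal's theorem, and it gives more. The whole sequence $(x_n)$, not just a subsequence, is $\frac{2D}{4-\varepsilon-2D}$-equivalent to the unit vector basis of $\ell_1$, with an explicit constant.

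In fact the paper's own $z_A^*$ satisfy $\norm{z_A^*}\le 2D$ and would give the same direct estimate, so the appeal to Rosenthal there is not actually needed.
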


\begin{proof}
Choose $\varepsilon>0$ so that
$$
       \alpha=4-\varepsilon-2D>0.
$$
For each $k$, choose $x_k\in S_X$ with
\begin{equation}\label{eq:norming-pair}
       \angles{x_k,f(2e_k)-f(-2e_k)}>4-\varepsilon.
\end{equation}
For $A\subseteq\N$ and $m\in\N$, put
$$
 u_m^A=\sum_{j=1}^m\varepsilon_j^A e_j,
 \qquad
 \varepsilon_j^A=
 \begin{cases}
  1,&j\in A,\\
 -1,&j\notin A.
 \end{cases}
$$
Thus $u_m^{A^c}=-u_m^A$, and both points belong to $G_1$.

Fix $m\geq k$.  If $k\in A$, then
$$
       d(u_m^A,2e_k)=1,
       \qquad
       d(u_m^{A^c},-2e_k)=1.
$$
Decomposing through these two endpoints and using
\eqref{eq:bilip-G2}--\eqref{eq:norming-pair}, we obtain
\begin{equation}\label{eq:positive-sign}
       \angles{x_k,f(u_m^A)-f(u_m^{A^c})}\geq\alpha.
\end{equation}
If $k\notin A$, the same argument with $A$ and $A^c$ interchanged gives
\begin{equation}\label{eq:negative-sign}
       \angles{x_k,f(u_m^A)-f(u_m^{A^c})}\leq-\alpha.
\end{equation}

Fix a nonprincipal ultrafilter $\mathcal U$ on $\N$ and set
$$
       z_A^*=w^*-\lim_{m,\mathcal U}
             \bigl(f(u_m^A)-f(u_m^{A^c})\bigr).
$$
The sequence under the limit is bounded, since
$d(u_m^A,u_m^{A^c})\leq2$.  Equations
\eqref{eq:positive-sign} and \eqref{eq:negative-sign} imply
\begin{equation}\label{eq:sign-separation}
 \angles{x_k,z_A^*}\geq\alpha\quad(k\in A),
 \qquad
 \angles{x_k,z_A^*}\leq-\alpha\quad(k\notin A).
\end{equation}
No subsequence of $(x_k)$ is weakly Cauchy: given a subsequence, choose $A$ to
alternate on its indices and use \eqref{eq:sign-separation}.  Rosenthal's
$\ell_1$ theorem \cite{Rosenthal} therefore yields a subsequence equivalent to
the unit vector basis of $\ell_1$.
\end{proof}

\begin{corollary}\label{cor:G2-sep-dual}
The metric space $G_2$ does not bi-Lipschitz embed with distortion strictly less than two
into any separable dual Banach space.
\end{corollary}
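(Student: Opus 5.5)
The plan is to argue by contradiction and reduce everything to Proposition~\ref{prop:radius-two}. Suppose $Y$ is a Banach space with $Y^*$ separable, and suppose $f:G_2\to Y^*$ satisfies
$$
a\,d(u,v)\le\norm{f(u)-f(v)}\le L\,d(u,v)\qquad(u,v\in G_2)
$$
with $L/a<2$. Replacing $f$ by $f/a$ normalizes the lower constant to one. We then have \eqref{eq:bilip-G2} with $D=L/a<2$, so we are exactly in the setting of Proposition~\ref{prop:radius-two} with $X=Y$. That proposition then gives an isomorphic copy of $\ell_1$ inside $Y$.

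The remaining step is to show that this contradicts the separability of $Y^*$. If $\ell_1$ embeds isomorphically into $Y$, then restriction gives a bounded linear surjection of $Y^*$ onto $\ell_1^*=\ell_\infty$. By Hahn--Banach, every functional on the copy of $\ell_1$ extends to $Y$ with norm controlled by the isomorphism constant, so the restriction map really is onto. Since $\ell_\infty$ is nonseparable, $Y^*$ would be nonseparable as well, which is the contradiction. Equivalently, one may quote the classical fact that a Banach space with separable dual contains no copy of $\ell_1$.

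I also need to make sure the notion of ``separable dual Banach space'' causes no trouble. If the target is some separable space $Z$ that is isomorphic (or even isometric) to a dual $Y^*$, I would compose $f$ with that isomorphism $T:Z\to Y^*$. The danger is that composing with a non-isometric $T$ multiplies the distortion by $\norm{T}\norm{T^{-1}}$, which could push it above two. For this reason I read the statement literally: the target is $Y^*$ itself, with its dual norm, and $Y^*$ is separable. Under that reading the argument above applies verbatim and no extra constant appears.

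No step here is a real obstacle, since all of the content is in Proposition~\ref{prop:radius-two}. The only points needing care are the normalization of the lower Lipschitz constant and the precise meaning of ``separable dual''.
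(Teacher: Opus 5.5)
Your proposal is correct and is the same argument the paper relies on. The corollary is stated there without a separate proof: normalize the lower constant, apply Proposition~\ref{prop:radius-two}, and use the classical fact that a space containing $\ell_1$ has $\ell_\infty$ as a quotient of its dual. Your insistence on the literal reading of ``separable dual'' (the target carries its dual norm, or is isometric to such a space) is also necessary. Under a merely isomorphic reading the statement would be false, since $\F(G_2)\cong\ell_1=c_0^*$ contains $G_2$ isometrically.
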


Put
$$
\mathfrak D_{\mathrm{sd}}(c_0)
=\inf\left\{\dist_{\mathrm{CL}}(f):
\begin{array}{c}
f:c_0\to Y^*\text{ is a coarse-Lipschitz embedding,}\\
Y^*\text{ is separable}
\end{array}
\right\}.
$$
Then
$$
       \mathfrak D_{\mathrm{sd}}(c_0)=2.
$$

The upper estimate follows from Theorem~\ref{thm:global-free}.

For the lower estimate, suppose that $f:c_0\to X^*$ is a coarse-Lipschitz
embedding, $X^*$ is separable, and for some constants in
\eqref{eq:cl-bounds} one has $L/a<2$.  For $t>0$, define
$$
       f_t(u)=\frac{f(tu)-f(0)}{t}
       \qquad(u\in G_2).
$$
If $u\neq v$, then $d(u,v)\geq1$, and
$$
 (a-b/t)d(u,v)
 \leq\norm{f_t(u)-f_t(v)}
 \leq(L+B/t)d(u,v).
$$
Choose sufficiently large $t$ with $a-b/t>0$ and
$$
       \frac{L+B/t}{a-b/t}<2.
$$

Then 
$$
\frac{1}{a-b/t}f_t:G_2\to X^*
$$
is a bi-Lipschitz embedding of distortion strictly smaller than 2, contradicting
Corollary~\ref{cor:G2-sep-dual}.  Therefore every such embedding has
coarse-Lipschitz distortion at least two.

\section{Further Remarks} We briefly point out some further consequences.

\begin{remark}\label{cor:uniformly-discrete-free} The space $\F(G)$ is already a natural object in nonlinear Banach space theory (see \cite[Section 4.5]{BL} for an overview). It is universal for separable metric spaces and coarse Lipschitz embeddings. The conclusion above says more specifically that {\em this canonical coarse universal free space is isomorphic to a separable dual Banach space.} The same observation has a linear consequence for a much larger class of free spaces. Every separable uniformly discrete metric space $M$ admits, for every $\varepsilon>0$, a bi-Lipschitz embedding into $G$ with distortion smaller than $2+\varepsilon$. Linearizing this embedding and then using the map from $\F(G)$ into $Z$, we obtain \[ \F(M)\longrightarrow Z \] with distortion arbitrarily close to four. Thus one fixed separable dual space linearly contains an isomorphic copy of $\F(M)$ for every separable uniformly discrete metric space $M$. 
\end{remark}

\begin{remark}\label{cor:optimal-szlenk}The construction is also optimal from the point of view of the Szlenk index. It follows from \cite[Theorem 4.3]{BLPP} that if a separable $X^*$ contains coarse Lipschitz copy of $c_0$, then $\operatorname{Sz}(X)>\omega.$ Thus $\omega^2$ is the least possible Szlenk index of a predual of such a target.
Let \[ E= \Big( \bigoplus_{n\geq0}E_n \Big)_{c_0} \] be the natural predual of $Z$, where $E_n$ is the predual of $\F(G_{2^n})$. Since for each $n$, $[b^*_{n,j}]_j$ is equivalent to $c_0$ basis, $E_n=[b^*_{n,j}]_j$ is isomorphic to $c_0$, and Brooker's direct-sum theorem \cite{Brooker} gives $\operatorname{Sz}(E)\leq\omega^2.$ Thus $\operatorname{Sz}(E)=\omega^2.$
\end{remark}

\begin{remark}\label{rem:compact-variant}
The retractions we consider in this paper also lead to a compact topology assumption needed in the criterion of \cite{GLPPRZ} to obtain an isometric dual space and recovers the compact-topological dualization scheme used in \cite{BLPP}.  Indeed, the map
$$
x\longmapsto(r_nx)_{n\geq0}
$$
identifies $G_R$ with a compact subset of $\prod_n A_n$, while the envelope
metric
$$
\widetilde d_R(x,y)=\sup_n d(r_nx,r_ny)
$$
is lower semicontinuous and satisfies
$$
d(x,y)\leq\widetilde d_R(x,y)\leq2d(x,y).
$$
We omit the details, since this variant is not used here.
\end{remark}

\begin{remark}\label{rem:godun-dual-distance}
The uniform constant $2$ obtained above is noteworthy in relation to a
classical quantitative renorming problem of Godun.  For a Banach space
$X$, let
$$
D_{\mathrm{dual}}(X)
=\sup_{\rho\sim\|\cdot\|}
\inf\bigl\{
d_{BM}\bigl((X,\rho),Y^*\bigr):
Y\text{ is a Banach space}
\bigr\},
$$
where the supremum is taken over all equivalent norms $\rho$ on $X$.
Godun proved that
$$
D_{\mathrm{dual}}(X)>2
$$
for every nonreflexive Banach space $X$; see \cite{Godun} and
\cite{S}.  In particular,
there are equivalent norms on $\ell_1$ whose Banach--Mazur distance from
every dual Banach space is strictly greater than $2$.  By contrast, the
particular renormings of $\ell_1$ arising here as $\F(G_R,d)$ have
distance at most $2$ from the class of dual spaces, uniformly in $R$.
For $R\geq2$, Proposition~\ref{prop:radius-two} shows that this distance
is exactly $2$, even though the equivalence of
$\F(G_R,d)$ with the standard $\ell_1$ norm has constant $2R$.

\end{remark}

\end{document}